\documentclass[11pt]{article}

\usepackage[margin=1in]{geometry}
\usepackage{amsmath,amssymb,amsthm}
\usepackage{hyperref}

\newtheorem{theorem}{Theorem}

\newtheorem{claim}[theorem]{Claim}

\title{Log-concavity of subsequence counts of words}
\author{Vincent Vatter%
	\thanks{Department of Mathematics, University of Florida, Gainesville, Florida, USA. Email: \texttt{vatter@ufl.edu}.}%
}

\date{}

\begin{document}

\maketitle

\noindent For a word $w$ over a finite alphabet, let $\binom{w}{k}$ denote the number of distinct words arising as length-$k$ subsequences of $w$. In~1976, Chase~\cite{chase:subsequence-num:} proved that the sequence $\binom{w}{0}, \binom{w}{1}, \ldots$ is log-concave. His proof uses a triangular array indexed by prefixes of $w$ together with a meticulous analysis of ratios of several sums. We decompose by first letter instead, reducing the proof to a weighted average.

\begin{theorem}[Chase~\cite{chase:subsequence-num:}]
For every word $w$ and every $k\ge 1$, $\binom{w}{k}^2 \ge \binom{w}{k-1}\binom{w}{k+1}$.
\end{theorem}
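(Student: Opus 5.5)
We decompose by first letter and induct on $|w|$. Write $f_w(k)=\binom{w}{k}$. For each letter $a$ occurring in $w$, let $w^{(a)}$ be the suffix of $w$ strictly after the \emph{first} occurrence of $a$. Every distinct length-$k$ subsequence of $w$ beginning with $a$ can be embedded greedily, using that first occurrence of $a$. So these subsequences are in bijection with the distinct length-$(k-1)$ subsequences of $w^{(a)}$, and
\[
f_w(k)=\sum_{a} f_{w^{(a)}}(k-1)\qquad(k\ge 1),\qquad f_w(0)=1 .
\]
Order the letters $a_1,\dots,a_m$ by their first occurrence in $w$. Then $w^{(a_m)}$ is a suffix of $w^{(a_{m-1})}$, which is a suffix of $w^{(a_{m-2})}$, and so on. All the $w^{(a_i)}$ are shorter than $w$, so induction applies to each of them.

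Dividing the decomposition at $k+1$ by the one at $k$ gives
\[
\frac{f_w(k+1)}{f_w(k)}=\sum_i \lambda_i\,\frac{f_{w^{(a_i)}}(k)}{f_{w^{(a_i)}}(k-1)},\qquad
\lambda_i=\frac{f_{w^{(a_i)}}(k-1)}{f_w(k)} ,
\]
a weighted average of the ratios $r_i(k)=f_{w^{(a_i)}}(k)/f_{w^{(a_i)}}(k-1)$. Log-concavity of $f_w$ says $f_w(k+1)/f_w(k)\le f_w(k)/f_w(k-1)$. Both sides are weighted averages: the left with weights $\lambda_i\propto f_{w^{(a_i)}}(k-1)$, the right with weights $\mu_i\propto f_{w^{(a_i)}}(k-2)$ (for $k\ge2$; the case $k=1$ is checked directly, since it reads $m^2\ge f_w(2)$ and each letter contributes at most $m$). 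By the induction hypothesis $r_i(k)\le r_i(k-1)$ for every $i$. So it suffices to show that switching from weights $\mu$ to weights $\lambda$ does not move mass toward indices with larger ratio.

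The nesting supplies the needed monotonicity. I will strengthen the induction hypothesis to a comparison statement: if $u$ is a suffix of $v$, then
\[
\frac{f_u(k)}{f_u(k-1)}\le\frac{f_v(k)}{f_v(k-1)}\quad\text{for all }k ,
\]
equivalently $f_u(k)f_v(k-1)\le f_u(k-1)f_v(k)$. This says longer suffixes have larger ratios: $r_1(k)\ge r_2(k)\ge\dots\ge r_m(k)$. It also says the ratio $\lambda_i/\mu_i\propto r_i(k-1)$ is decreasing in $i$. Hence $\lambda$ puts relatively more weight on small $i$, where the ratios are large, which goes the wrong way. So the naive comparison of $\lambda$ with $\mu$ fails. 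Instead I will use the Chebyshev sum inequality in the form
\[
\sum_i \mu_i r_i(k-1)\,r_i(k)\le \Big(\sum_i\mu_i r_i(k-1)\Big)^2 ,
\]
after substituting $\lambda_i=\mu_i r_i(k-1)/\sum_j\mu_j r_j(k-1)$. The target $\sum\lambda_i r_i(k)\le\sum\mu_i r_i(k-1)$ becomes exactly this. It follows from $r_i(k)\le r_i(k-1)$ together with the variance inequality $\sum\mu_i x_i^2\ge(\sum\mu_i x_i)^2$ only if the inequality runs the other way, so this step needs care. What I would actually aim for is a bound $r_i(k)\le c$ for all $i$ with $c\le\sum\mu_i r_i(k-1)$. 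A natural candidate is $c=r_m(k-1)$ or an interlacing bound of the form $r_1(k)\le r_m(k-1)$, coming from the suffix structure.

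The main obstacle is therefore the right strengthened hypothesis: a cross-inequality between nested suffixes that closes under the decomposition. It must be strong enough to control the spread of the ratios $r_i$, and it must also be re-provable for $w$ and its own suffixes by the same first-letter decomposition. The suffixes $w^{(a)}$ of a suffix $u$ of $w$ are themselves nested suffixes interleaved with those of $w$. My first attempt is the hypothesis $f_u(k)f_v(k)\ge f_u(k-1)f_v(k+1)$ for $u$ a suffix of $v$. This is a mixed log-concavity that specializes to the theorem when $u=v$. I would verify that it propagates through the weighted-average identity above, term by term over pairs of letters.
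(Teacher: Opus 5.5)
Your decomposition and weighted-average identity are correct, and you even write down the right lemma: if $u$ is a suffix of $v$ then $f_u(k)/f_u(k-1)\le f_v(k)/f_v(k-1)$. The proposal never closes, though, because you apply that lemma only among the tails $w^{(a_i)}$. You then try to compare the $\lambda$-average of the $r_i(k)$ with the $\mu$-average of the $r_i(k-1)$, which you correctly see goes the wrong way. The missing step is to not decompose the right-hand side at all. Apply your lemma with $u=w^{(a_i)}$ and $v=w$ itself. This gives $r_i(k)\le f_w(k)/f_w(k-1)=\sum_j\mu_j r_j(k-1)$ for every $i$ of positive weight, which is exactly the constant $c$ you were looking for. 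A weighted average of numbers at most $c$ is at most $c$. With this, log-concavity of the tails ($r_i(k)\le r_i(k-1)$) is not needed, and neither is induction on $|w|$ for the theorem. Your candidates for $c$ do not work: for $w=abc$ and $k=2$, $r_1(2)=1/2$ but $r_3(1)=0$, and $\mu_3>0$.

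The real content is the suffix lemma, and you do not prove it. Note that the instance $v=w$ is not supplied by an induction hypothesis on shorter words, so it needs its own argument. Write $R_k(x)=f_x(k)/f_x(k-1)$, with $R_k(x)=0$ when $f_x(k)=0$. It suffices to show $R_k(x)\le R_k(ax)$ for a single prepended letter $a$, by induction on $k$.
\begin{itemize}
\item Base case $k=1$: $R_1$ counts distinct letters, so $R_1(x)\le R_1(ax)$.
\item For $k\ge 2$, use your identity one level down: $R_k(ax)$ and $R_k(x)$ are weighted averages of $R_{k-1}$ over tails. For $b\ne a$ the $b$-tails of $ax$ and of $x$ coincide, so those terms are identical.
\item The $a$-term has value $R_{k-1}(x)$ and weight $f_x(k-2)$ in $ax$. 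In $x$ it has value $R_{k-1}(x^{(a)})$ and weight $f_{x^{(a)}}(k-2)\le f_x(k-2)$, or it is absent.
\item All tails are suffixes of $x$, so by induction every term of positive weight is at most $R_{k-1}(x)$. Raise the $a$-term in the average for $x$ to this maximal value, then increase its weight to $f_x(k-2)$. Neither step decreases the average, giving $R_k(x)\le R_k(ax)$.
\end{itemize}
Finally, your fallback hypothesis $f_u(k)f_v(k)\ge f_u(k-1)f_v(k+1)$ for $u$ a suffix of $v$ is false. With $u=a$, $v=bcda$ and $k=1$ it reads $1\cdot 4\ge 1\cdot 6$.
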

\begin{proof}
For a letter $\ell$ occurring in $w$, let $w_\ell$ denote the suffix of $w$ after its first occurrence of $\ell$, the \emph{$\ell$-tail of $w$}. Every nonempty subsequence of $w$ is determined by its first letter $\ell$ together with a subsequence of $w_\ell$, so
\[
	\binom{w}{k+1} = \sum_\ell \binom{w_\ell}{k},
\]
the sum running over distinct letters $\ell$ of $w$. 
Writing $\rho_k(w) = \binom{w}{k}/\binom{w}{k-1}$, taken to be $0$ whenever the numerator is $0$, log-concavity is equivalent to $\rho_{k+1}(w) \le \rho_k(w)$. This is immediate when $\binom{w}{k} = 0$. Otherwise, applying the identity above to both numerator and denominator expresses $\rho_{k+1}(w)$ as a weighted average:
\[
	\rho_{k+1}(w)
	=
	\frac{\sum_\ell \binom{w_\ell}{k}}{\sum_\ell \binom{w_\ell}{k-1}}
	=
	\frac{\sum_\ell \binom{w_\ell}{k-1}\, \rho_k(w_\ell)}{\sum_\ell \binom{w_\ell}{k-1}}. \tag{$\dagger$}
\]

\begin{claim}
If $w = uv$ for words $u$ and~$v$, then $\rho_k(v) \le \rho_k(w)$.
\end{claim}
Suppose that $u$ is a single letter $a$; the general case follows by iteration. Apply induction on $k$, noting that $\rho_1$ counts distinct letters, so $\rho_1(v)\le\rho_1(av)$. 
For ${k\ge 2}$, assume $\rho_k(v) > 0$ (else the claim is trivial), and compare~$(\dagger)$, with $k$ replaced by ${k-1}$, for~$av$ and for $v$.
For each letter $b\ne a$, the $b$-tail is the same in $av$ and in $v$, so these letters contribute identically to both averages. The letter $a$ contributes $\rho_{k-1}(v)$ with weight $\binom{v}{k-2}$ to $\rho_k(av)$, and either nothing or $\rho_{k-1}(v_a)$ with weight~$\binom{v_a}{k-2}$ to $\rho_k(v)$. All tails involved are suffixes of~$v$, so by induction each term of positive weight in both averages is at most $\rho_{k-1}(v)$; also, $\binom{v_a}{k-2} \le \binom{v}{k-2}$, as~$v_a$ is a suffix of $v$. Thus replacing the value of the $a$-term in the average for $v$ by $\rho_{k-1}(v)$, then increasing its weight (from $0$, if the term is absent) to $\binom{v}{k-2}$, cannot decrease the average, proving the claim.

\medskip

By the claim, every term $\rho_k(w_\ell)$ of positive weight in $(\dagger)$ is at most $\rho_k(w)$, as each tail $w_\ell$ is a suffix of $w$. This proves that $\rho_{k+1}(w) \le \rho_k(w)$.
\end{proof}

\end{document}